\newtheorem{theorem}{Theorem}[section]
\newtheorem{corollary}[theorem]{Corollary}
\newtheorem{lemma}[theorem]{Lemma}
\newtheorem{proposition}[theorem]{Proposition}
\theoremstyle{definition}
\newtheorem{definition}[theorem]{Definition}
\newtheorem{remark}[theorem]{Remark}
\newtheorem{example}[theorem]{Example}
\newcommand{\Q}{\mathbb{Q}}
\newcommand{\OO}{\mathcal{O}}
\newcommand{\Z}{\mathbb{Z}}
\newcommand{\F}{\mathbb{F}}
\newcommand{\C}{\mathbb{C}}
\newcommand{\p}{\mathcal{P}}
\newcommand{\N}{\mathbb{N}}
\newcommand{\abs}[1]{\left|#1\right|}
\newcommand{\gpf}{\operatorname{gpf}}
\def\ord{\operatorname{ord}}
\def\End{\operatorname{End}}
\def\ord{\operatorname{ord}}
\title{Some effectivity results for primitive divisors of
	elliptic divisibility sequences}
\author{Matteo Verzobio}
\date{}
\begin{document}
	\begin{abstract}
		Let $P$ be a non-torsion point on an elliptic curve defined over a number field $K$ and consider the sequence $\{B_n\}_{n\in \N}$ of the denominators of $x(nP)$. We prove that every term of the sequence of the $B_n$ has a primitive divisor for $n$ greater than an effectively computable constant that we will explicitly compute. This constant will depend only on the model defining the curve.
	\end{abstract}
	\maketitle
	\renewcommand{\thefootnote}{}
	\footnote{2020 \emph{Mathematics Subject Classification}: : 11B39, 11G05, 11G50.}
	\footnote{\emph{Key words and phrases}: Elliptic curves, primitive divisors, elliptic divisibility sequences}
	\renewcommand{\thefootnote}{\arabic{footnote}}
	\setcounter{footnote}{0}
	\section{Introduction}
	Let $E$ be an elliptic curve defined by the equation
	\[
	y^2+a_1xy+a_3y=x^3+a_2x^2+a_4x+a_6,
	\]
	with coefficients in a number field $K$. Let $P\in E(K)$ be a non-torsion point and let $\OO_K$ be the ring of integers of $K$.
	Let us define the fractional ideal
	\begin{equation}\label{defbn}
		\left(x(nP)\right)\OO_K=\frac{A_n}{B_n}
	\end{equation}
	with $A_n$ and $B_n$ two relatively prime integral $\OO_K$-ideals. We want to study the sequence of integral $\OO_K$-ideals $\{B_n\}_{n\in \N}$. These are the so-called \textbf{elliptic divisibility sequences}. In particular, we want to study when a term $B_n$ has a primitive divisor, i.e., when there exists a prime ideal $\p$ such that \[\p \nmid B_1B_2\cdots B_{n-1} \mbox{    but    } \p\mid B_n.\]In \cite[Proposition 10]{silverman}, Silverman proved that, if $E$ is defined over $\Q$, then $B_n$ has a primitive divisor for $n$ large enough. This result was generalized for every number field $K$ by Cheon and Hahn in \cite{ChHa}, where the following theorem is proved.
	\begin{theorem}\label{ChHat} \cite[Main Theorem]{ChHa}
		Let $E$ be an elliptic curve defined over a number field $K$ and let $P$ be a non-torsion point in $E(K)$. Consider the sequence $\{B_n\}_{n\in \N}$ of integral $\OO_K$-ideals as defined in \eqref{defbn}. Then, for all but finitely many $n\in \N$, $B_n$ has a primitive divisor.
	\end{theorem}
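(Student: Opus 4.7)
The plan is to mimic Silverman's original argument for $\Q$ \cite{silverman}, transposing the archimedean and non-archimedean estimates to the adelic setting over $K$. The two essential ingredients are (i) the canonical-height asymptotic $\log N(B_n) \sim [K:\Q]\, n^2 \hat{h}(P)$, and (ii) the structure of the formal group of $E$ at primes of good reduction, which pins down the exact $\p$-adic valuation of $B_n$ at multiples of the rank of apparition.

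First I would establish
\[
\log N(B_n) = [K:\Q]\, n^2 \hat{h}(P) + O(n),
\]
via the standard decomposition of the canonical height into local heights, together with the boundedness of local heights outside the support of the denominator ideal. Since $P$ is non-torsion, $\hat{h}(P)>0$, so $\log N(B_n)$ grows quadratically. Next, for a prime $\p$ of good reduction, let $m_\p$ denote its rank of apparition, i.e.\ the smallest $m\geq 1$ with $\p\mid B_m$. Exploiting that multiplication by $k$ is an automorphism of the formal group whenever $\gcd(k,p)=1$ (with $p$ the residue characteristic of $\p$), one obtains a formula of the shape
\[
v_\p(B_n) = v_\p(B_{m_\p}) + c_\p\, v_\p(n/m_\p) \quad \text{whenever } m_\p \mid n,
\]
where the constant $c_\p$ depends on the Weierstrass model and the ramification $e_\p$. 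This formula holds outside a finite exceptional set of primes (bad reduction, small residue characteristic), whose total contribution to $\log N(B_n)$ is $O(\log n)$.

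Now assume, for contradiction, that $B_n$ has no primitive divisor for arbitrarily large $n$. Then every $\p\mid B_n$ satisfies $m_\p\mid n$ with $m_\p<n$. Grouping primes by $d=m_\p$ and applying the formula above,
\[
\log N(B_n) \leq \sum_{\substack{d\mid n\\ d<n}} \log N(B_d) + E(n),
\]
where $E(n)$ collects the correction terms $c_\p v_\p(n/d)\log N(\p)$ together with the bounded exceptional contribution. Since $\sum_\p v_\p(n/d)\log N(\p) = [K:\Q]\log(n/d)$, one obtains $E(n) = O(d(n)\log n) = o(n^2)$, where $d(n)$ is the divisor function. Combining with the height asymptotic for each $\log N(B_d)$ and the elementary inequality
\[
\sum_{\substack{d\mid n\\ d<n}} d^2 \leq (\zeta(2)-1)\, n^2 < n^2,
\]
one derives $n^2\hat{h}(P) \leq (\zeta(2)-1)\, n^2 \hat{h}(P) + o(n^2)$, a contradiction for $n$ large.

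The main obstacle is ingredient (ii): establishing the $\p$-adic formula cleanly and controlling the exceptional primes uniformly. One must handle the ramification index $e_\p$ correctly inside the formal-group computation and bound the (finite) contribution of bad-reduction primes and primes of small residue characteristic, without losing effectivity. Once these local issues are settled, the global contradiction falls out from combining the quadratic height asymptotic with the $\sigma_2$ estimate.
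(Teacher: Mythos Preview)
Your argument has a genuine gap in ingredient (i). The asymptotic $\log N(B_n) = [K:\Q]\, n^2 \hat h(P) + O(n)$ does not follow from ``boundedness of local heights outside the support of the denominator ideal'', because the archimedean local heights are \emph{not} bounded: $h_\nu(nP) = \max\{0,\log|x(nP)|_\nu\}$ blows up whenever $nP$ approaches the identity in $E(K_\nu)$, and the multiples of a non-torsion point do so along a subsequence. From the decomposition
\[
D\, h(nP) \;=\; \sum_{\nu\in M_K^0} n_\nu h_\nu(nP) \;+\; \sum_{\nu\in M_K^\infty} n_\nu h_\nu(nP)
\]
one only gets the easy upper bound $\log N(B_n)\le 2D\,n^2\hat h(P)+O(1)$; for your contradiction you need the \emph{lower} bound, i.e.\ you need $\sum_{\nu\in M_K^\infty} n_\nu h_\nu(nP)=o(n^2)$. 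That statement is precisely Siegel's theorem ($h_\nu(nP)/h(nP)\to 0$ for each place $\nu$), and it is exactly the ineffective input that the paper isolates as step~(3) in its outline of the Cheon--Hahn proof. Without it the inequality $n^2\hat h(P) \le (\zeta(2)-1)\,n^2\hat h(P)+o(n^2)$ never materialises, and your remark about ``not losing effectivity'' cannot be sustained either: Siegel is ineffective, and replacing it by something effective (David's linear forms in elliptic logarithms) is the whole content of the paper's Section~\ref{secinf}.

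With Siegel inserted (so the error in (i) becomes $o(n^2)$ rather than $O(n)$), your argument is a correct variant of the paper's outline. The remaining difference is organisational: you group non-primitive primes by their rank of apparition $d=m_\p$ and sum over \emph{all} proper divisors $d\mid n$, using $\sum_{d\mid n,\,d<n} d^2 \le (\zeta(2)-1)n^2$; the paper (following Cheon--Hahn) observes instead that any non-primitive $\p\mid B_n$ already divides some $B_{n/q}$ with $q$ a \emph{prime} divisor of $n$, and sums only the $\omega(n)$ terms $h((n/q)P)$, invoking $\sum_{q\mid n} q^{-2}<\tfrac12$. The latter grouping keeps the error terms smaller (no divisor function appears) and is what makes the subsequent effectivisation cleaner, but for the qualitative statement either route works once the archimedean input is in place.
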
 
	The previous theorem is not effective. Indeed, the proof relies on Siegel’s ineffective theorem about integral points
	on elliptic curves. The aim of this paper is to make the work of \cite{ChHa} effective. Indeed, we will explicitly compute a constant $C$ so that, for $n>C$, $B_n$ has always a primitive divisor.
	\begin{theorem}\label{Thm1}
		Let $E$ be an elliptic curve defined over a number field $K$ and let $P$ be a non-torsion point in $E(K)$. Consider the sequence $\{B_n\}_{n\in \N}$ of integral $\OO_K$-ideals as defined in \eqref{defbn}. There exists a constant $C(E/K,\mathcal{M})>0$, effectively computable and depending only on the curve $E$ over the field $K$ equipped
		with a model $\mathcal{M}$ also defined over $K$, such that $B_n$ has a primitive divisor for \[n>C(E/K,\mathcal{M}).\]
	\end{theorem}
	In Section \ref{expsec}, we explicitly compute such a constant $C(E/K,\mathcal{M})$ (see Equation \eqref{finalK}).
	\begin{remark}
		The dependence on the model $\mathcal{M}$ is necessary. Indeed, given a non-torsion point $P$ on an elliptic curve $E$ and a positive constant $C$, it is easy to show that we can find a model of $E$ such that $B_n$ does not have a primitive divisor for all $n\leq C$.
	\end{remark}
	\begin{remark}
		It is conjectured that, in the case when $\mathcal{M}$ is minimal, the constant $C(E/K,\mathcal{M})$ should depend only on the field $K$. In \cite[Theorem 1]{ingramsilverman} it is proved that the number of terms without a primitive divisor of an elliptic divisibility sequence can be bounded by a constant that does not depend on $E$ and $P$, in the case when $E$ is given by a minimal model, $K=\Q$, and assuming the $abc$-conjecture.
	\end{remark}
	\begin{remark}
		We believe that the techniques used in this paper can be applied also to a generalization of elliptic divisibility sequences. Let $\OO$ be the endomorphism ring of $E$ and, given $\alpha\in \OO$, define $B_\alpha$ as the denominator of $(x(\alpha P))\OO_K$. The sequence $\{B_\alpha\}_{\alpha\in \OO}$ is a sequence of ideals and one can give a definition of primitive divisors also for these sequences (see \cite[Section 1]{EDSCM}). It has been shown in \cite[Main Theorem]{EDSCM} that also in this case there are only finitely many terms that do not have a primitive divisor (see also \cite{shiftedcm}). In the case when $\End(E)=\Z$ this is a trivial corollary of Theorem \ref{ChHat}, but in the case  $\End(E)\neq \Z$ (i.e. when $E$ has complex multiplication) this is far from being easy. We believe that using the techniques of this paper one can find an explicit upper bound for the degree of $\alpha$ such that $B_\alpha$ does not have a primitive divisor, in the case when $\End(E)$ is a maximal order and it is a Principal Ideal Domain.
	\end{remark}
	\section*{Acknowledgements}
	This paper is part of the author's PhD thesis at Università of Pisa. Moreover, this project has received funding from the European Union’s Horizon 2020 research and 
	innovation programme under the Marie Skłodowska-Curie Grant Agreement No. 
	101034413. 
	
	I thank the referee for many helpful comments.
	\section{Notation}\label{sec:not}
	The following notation will be used during the paper. The curve $E$ is defined by the equation
	\[
	y^2+a_1xy+a_3y=x^3+a_2x^2+a_4x+a_6
	\]
	with coefficients in the number field $K$.
	\begin{itemize}[leftmargin=*]
		\item[] $\Delta$ is the discriminant of the equation defining the curve;
		\item[] $\Delta_{E/K}$ is the minimal discriminant of the elliptic curve;
		\item[] $j(E)$ is the $j$-invariant of the curve;
		\item[] $D=[K:\Q]$ is the degree of the number field $K$;
		\item[] $\OO_K$ is the ring of integers of $K$;
		\item[] $\Delta_K$ is the discriminant of the field $K$;
		\item[] $\mathit{f}_{E/K}$ is the conductor of the curve;
		\item[] $
		\sigma_{E/K}=\frac{\log\abs{\N_{K/\Q}(\Delta_{E/K})}}{\log\abs{\N_{K/\Q}(\mathit{f}_{E/K})}},
		$ where $\N_{K/\Q}$ is the norm of the field extension, is the Szpiro quotient; if $E/K$ has everywhere good reduction (and then $\mathit{f}_{E/K}=1$), we put $\sigma_{E/K}=1$;
		\item[] if $x\in \OO_K$ is non-zero, then $\gpf(x)$ is the greatest rational prime $p$ so that $\ord_p(\N_{K/\Q}(x))>0$;
		\item[] if $n\in \N$ is non-zero, then $\omega(n)$ is the number of rational prime divisors of $n$;
		\item[] if $x\in K^*$, define $\mathfrak{m}(x)=\max_{\p}\{\ord_\p(x)\}$ where the maximum runs over all primes in $\OO_K$.
	\end{itemize}
	\section{Preliminaries}
	Let $M_K$ be the set of all places of $K$, take $\nu \in M_K$, and let $\abs{\cdot}_\nu$ be the absolute value associated with $\nu$. Let $n_\nu$ be the degree of the local extension $K_\nu/\Q_\nu$. We normalize the absolute values as in \cite[Section VIII.5, after Example VIII.5.1]{arithmetic}. If $\nu$ is finite, then $\abs{p}_\nu=p^{-1}$, where $p$ is the rational prime associated to $\nu$. If $\nu$ is infinite, then $\abs{x}_\nu=\max\{x,-x\}$ for every $x\in \Q$. Thanks to this choice, we have the usual product formula, i.e.
	\[
	\prod_{\nu \in M_K}\abs{x}_\nu^{n_\nu}=1
	\]
	for every $x\in K^*$. Define $M_K^\infty$ as the set of infinite places of $K$ and $M_K^0$ as the set of finite places.
	
	Now, we define the height of a point on the curve; more details can be found in \cite[Chapter VIII]{arithmetic}. Given $x\in K^*$, define
	\[
	h_\nu(x):=\text{max}\left\{0,\log \abs{x}_\nu\right\}
	\]
	and
	\[
	h(x):=\frac{1}{[K:\Q]}\sum_{\nu \in M_K} n_\nu h_\nu(x).
	\]
	For every point $R\neq O$ of $E(K)$, define
	\[
	h_\nu(R):=h_\nu(x(R))
	\]
	and the height of the point as
	\[
	h(R):=h(x(R)).
	\]
	So, for every $R\in E(K)\setminus \{O\}$,
	\[
	h(R)=\frac{1}{[K:\Q]}\sum_{\nu \in M_K} n_\nu h_\nu(R).
	\]
	Finally, put
	\[
	h(O)=0,
	\]
	where $O$ is the identity of the curve.
	
	Given a point $R$ in $E(K)$, define the canonical height as in \cite[Proposition VIII.9.1]{arithmetic}, i.e.
	\[
	\hat{h}(R)=\frac 12\lim_{N\to \infty}\frac{h\left(2^NR\right)}{4^N}.
	\] 
	
	We recall the properties of the height and of the canonical height that will be necessary for this paper.
	\begin{itemize}[leftmargin=*]
		\item It is known that the difference between the height and the canonical height can be bounded by an explicit constant. In particular, we will use the following result. Let
		\[
		C_E=\frac{h(j(E))}{4}+\frac{h(\Delta)}{6}+2.14.
		\]
		If $E$ is defined by a Weierstrass equation in short form and with integer coefficients, then, for every $R\in E(K)$,
		\[
		\abs{h(R)-2\hat{h}(R)}\leq C_E.
		\]
		This is proved in \cite[Equation 3]{Silvermancanonical}.
		\item The canonical height is quadratic, i.e.
		\[
		\hat{h}(nR)=n^2\hat{h}(R)
		\]
		for every $R$ in $E(K)$ and $n\in \N$.
		\item For every non-torsion point $R\in E(K)$,
		\[
		\hat{h}(R)>0.
		\]
		There exists a positive constant $J_E$, effectively computable and depending only on $E$ and $K$, such that
		\[
		J_E\leq \hat{h}(P)
		\]
		for every non-torsion point $P\in E(K)$. Thanks to \cite[Theorem 2]{petsche}, we can take 
		\[
		J_E=\frac{\log\abs{\N_{K/\Q}(\Delta_{E/K})}}{10^{15}D^3\sigma_{E/K}^6\log^2\left(104613D\sigma_{E/K}^2\right)}
		\]
		where $\N_{K/\Q}$ is the norm of the field extension, $D=[K:\Q]$, and
		\[
		\sigma_{E/K}=\frac{\log\abs{\N_{K/\Q}(\Delta_{E/K})}}{\log\abs{\N_{K/\Q}(\mathit{f}_{E/K})}}.
		\]
		If $\mathit{f}_{E/K}=1$, we put $\sigma_{E/K}=1$. 
		The conductor $\mathit{f}_{E/K}$ is defined in \cite[beginning of Section VIII.11]{arithmetic}.
	\end{itemize}
	
	In order to prove that $B_n$ has a primitive divisor for all but finitely many terms, Silverman in \cite{silverman} and Cheon and Hahn in \cite{ChHa}, used a Theorem of Siegel that says
	\[
	\lim_{n\to \infty}\frac{h_\nu(nP)}{h(nP)}=0
	\]
	for every $\nu\in M_K$, as is proved in \cite[Theorem IX.3.1]{arithmetic}. This result is not effective and hence their results are not effective. We will use some results that tell us effectively how this limit goes to $0$. As we will show later, for the finite places we will use some results on the formal group of the elliptic curve, and for the infinite places we will use the work of David in \cite{David}. The idea of using the result of David to study primitive divisors of elliptic divisibility sequences has been introduced, as far as we know, by Streng in \cite[Section 3]{EDSCM}.
	
	We conclude this section by showing that we can focus only on the case when $E$ is defined by a Weierstrass equation in short form and with integer coefficients. We will do that in Lemma \ref{lemma:cov}. In order to prove that lemma, we need the following.
	
	\begin{lemma}\label{lemma:boundorder}
		Let $E$ be an elliptic curve defined over $K$ by a Weierstrass equation with integer coefficients and let $P\in E(K)$. Let $\nu\in M_K^0$, $\p$ be the associated prime, and $p$ be the associated rational prime. There exists
		\[
		k\leq p^{\frac{\nu(\Delta(E))}{12}}(2\N_{K/\Q}(\p)+1)\max\{4,\ord_\p(j(E)^{-1})\}
		\]
		such that $\nu(x(kP))<0$.
	\end{lemma}
	\begin{proof}
		Let $E_\p$ be a minimal model for the elliptic curve over $K_\p$ and let $P_\p$ be the image of $P$ under the change of variables from $E$ to $E_\mathcal{P}$. So, $x(P)=u_\p^2x(P_\p)+r_\p$ for some $u_\p,r_\p\in K_\p$. By \cite[Proposition VII.1.3.d]{arithmetic}, $\nu(u_\p)\geq 0$ and $\nu(r_\p)\geq 0$. Note that $12\nu(u_\p)=\nu(\Delta(E))-\nu(\Delta(E_\p))\leq \nu(\Delta(E))$ and so $\nu(u_\p)\leq \nu(\Delta(E))/12$. There exists a multiple $kP_\p$ with $k\leq \max\{4,\ord_\p(j(E_\p)^{-1})\}$ such that $kP_\p$ is not a singular point in $E_\p(\F_\p)$ (see \cite[Corollary C.15.2.1]{arithmetic}). Observe that $\F_\p$ has $\N_{K/\Q}(\p)$ elements and then the group of non-singular points in $E_\p(\F_\p)$ has at most $2\N_{K/\Q}(\p)+1$ elements. So, the order of $kP_\p$ in the group of non-singular points modulo $\p$ is at most $2\N_{K/\Q}(\p)+1$. Hence, there exists \[n_\p(P_\p)\leq (2\N_{K/\Q}(\p)+1)\max\{4,\ord_\p(j(E_\p)^{-1})\}\] such that $n_\p(P_\p)P_\p$ reduces to the identity modulo $\p$. Given a point $Q$ in $E(K)$, it is easy to show that $Q$ reduces to the identity modulo $\p$ if and only if $\nu(x(Q))<0$. Therefore, $\nu(x(n_\p(P_\p)P_\p))<0$. 
		
		From a classic result on formal groups, \[\nu\left(x(p^{\nu(u_\p)}n_\p(P_\p)P_\p)\right)<-2\nu(u_\p).\] For more details on formal groups, see Lemma \ref{lemma:boundnu} or \cite[Corollary IV.4.4]{arithmetic}. Using that $\nu(u_\p)\geq 0$ and $\nu(r_\p)\geq 0$, we have
		\begin{align*}
			\nu\left(x(p^{\nu(u_\p)}n_\p(P_\p)P)\right)&=\nu\left(u_\p^2x(p^{\nu(u_\p)}n_\p(P_\p)P_\p)+r_\p\right)\\&=\nu\left(x(p^{\nu(u_\p)}n_\p(P_\p)P_\p)\right)+2\nu(u_\p)\\&<0.
		\end{align*}
		We conclude recalling that $\nu(u_\p)\leq \nu(\Delta(E))/12$.
	\end{proof}
	
	\begin{lemma}\label{lemma:cov}
		Let $E/K$ be an elliptic curve defined over $K$ by a Weierstrass model $\mathcal{M}$. 
		Then, there exists an elliptic curve $E'$ defined over $K$ by a short Weierstrass model $\mathcal{M}'$ with integer coefficients that is isomorphic over $K$ to $E$, and a positive rational integer $s(E/K,\mathcal{M})$ such that:
		if Theorem 1.2 holds with $C(E'/K,\mathcal{M}')$ for $E',\mathcal{M}'$, then it holds with \[C(E/K,\mathcal{M}) =\max\{C(E'/K,\mathcal{M}'), s(E/K,\mathcal{M})\}\] for $E,\mathcal{M}$. The constant $s(E/K,\mathcal{M})$ is effectively computable and will be defined during the proof (see Equation \eqref{se}). It depends only on $E$ and $\mathcal{M}$.
	\end{lemma}
	\begin{proof}
		Recall that $E$ is defined by the equation
		\[
		y^2+a_1xy+a_3y=x^3+a_2x^2+a_4x+a_6.
		\]
		Let $u$ be the smallest positive rational integer such that, after the change of variables,
		\[
		(x,y)\to (x',y')=\Big(u^2\Big(x+\frac{a_1^2}{12}+\frac{a_2}{3}\Big),u^3\Big(y+\frac{a_1}{2}x+\frac{a_3}{2}\Big)\Big)
		\]
		we have that $E$ is isomorphic to a curve $E'$ of the form
		$y'^2=x'^3+ax'+b$ with $a$ and $b$ in $\OO_K$. Let $P'$ be the image of $P$ under this isomorphism. So, $x'=u^2x+r$ for $u\in \Z_{\neq 0}$ and $r\in K$. Let $q$ be the integral $\OO_K$-ideal such that, for every $\nu\in M_K^0$,
		\[
		\nu(q)= \max\left\{\abs{\nu(u^2)},-\nu(r)\right\}.
		\]
		If $r=0$, we take $q$ such that, for every $\nu\in M_K^0$, $\nu(q)=\abs{\nu(u^2)}$.
		Note that $q$ depends only on $E$ and $\mathcal{M}$.
		Let $B_n'$ be the elliptic divisibility sequence associated with $E'$ and $P'$.
		
		Let $\nu$ be the absolute value associated with a prime $\p$ coprime with $q$. We have $\nu(u^2)=0$ and $\nu(r)\geq 0$.
		If $\p$ divides $B_n$, then $\nu(x(nP))<0$ and
		\[
		\nu\left(u^2x(nP)+r\right)=\nu\left(u^2x(nP)\right)=\nu(x(nP))=-\nu(B_n)<0.
		\]
		Therefore,
		\[
		\nu(B_n')=\nu(B_n)>0.
		\]
		In the same way, if $\p$ divides $B_n'$, then
		\[
		\nu(B_n')=\nu(B_n)>0.
		\]
		So, if $\p$ is coprime with $q$, then $\p$ divides $B_n$ if and only if it divides $B_n'$.
		
		Let 
		\begin{equation}\label{se}
			s=\max_{\p\mid q}\Big\{p^{(12\nu_\p(u)+\nu_\p(\Delta(E)))/12}(2\N_{K/\Q}(\p)+1)(\max\{4,\ord_\p(j(E')^{-1})\})\Big\},
		\end{equation}
		where $p$ is the rational prime associated with $\p$.
		
		Assume $n>s$. We will show that, if $\p$ is a primitive divisor of $B_n'$, then $\p$ is a primitive divisor also for $B_n$.
		
		Let $\p$ be a primitive divisor of $B_n'$. Suppose that $\p$ divides $q$. By Lemma \ref{lemma:boundorder}, there exists
		\begin{align*}
			k&\leq p^{(\nu(\Delta(E')))/12}(2\N_{K/\Q}(\p)+1)(\max\{4,\ord_\p(j(E')^{-1})\})\\&=p^{(12\nu(u)+\nu(\Delta(E)))/12}(2\N_{K/\Q}(\p)+1)(\max\{4,\ord_\p(j(E')^{-1})\})
			\\&\leq s
		\end{align*}
		such that $\nu(B_{k}')>0$.
		But, since $\p$ is a primitive divisor of $B_n'$ we know that $k\geq n$. Hence, $n\leq s$ and this is absurd since we assumed $n>s$. So, $\p$ does not divide $q$. Since $\p$ is a primitive divisor of $B_n'$ and $\p$ is coprime with $q$, then $\p$ divides $B_n$ and does not divide $B_k$ for $k<n$. Therefore, it is a primitive divisor for $B_n$.
		
		In conclusion, if $n>\max\{C(E'/K,\mathcal{M}'),s\}$, then $B_n'$ has a primitive divisor $\p$. As we showed, $\p$ is also a primitive divisor for $B_n$. Therefore, $B_n$ has a primitive divisor for all $n>\max\{C(E'/K,\mathcal{M}'),s\}$. 
		
		Observe that $s$ depends on $j(E')$, $\Delta(E)$, $u$, $r$, and $q$. It is easy to show these five values depend only on $E$ and the model defining the curve. So, we are done.
	\end{proof}
	
	From now on, we will assume that $E$ is defined by a short Weierstrass equation with coefficients in $\OO_K$ of the form
	\[y^2=x^3-(g_2/4)x-(g_3/4).\]
	Once we prove Theorem \ref{Thm1} under this assumption, then we can prove it in general using Lemma \ref{lemma:cov}.
	It is useful to have $E$ in this form in order to apply the work of David in \cite{David}, as we will do in Section \ref{secinf}.
	\section{Structure of the proof}
	We start by recalling the structure of the proof of Cheon and Hahn of Theorem \ref{ChHat}.
	\begin{enumerate}[leftmargin=*]
		\item\label{1} If $\p$ is a non-primitive divisor of $B_n$, then $\p$ divides $B_{n/q}$ for $q$ a prime divisor of $n$. Moreover, if $\nu$ is the place associated to $\p$, then $h_\nu(nP)$ and $ h_\nu\Big(\frac nq P\Big)$ are roughly the same.
		\item\label{sumfin} If $B_n$ does not have a primitive divisor, then, for every $\nu\in M_K^0$, we have
		\[
		h_\nu(nP)\leq\sum_{q\mid n} h_\nu\Big(\frac nq P\Big)+O(\log n),
		\]
		using Step \eqref{1}.
		Therefore,
		\[
		\sum_{\nu\in M_K^0}h_\nu(nP)\leq\sum_{\nu\in M_K^0}\left(\sum_{q\mid n} h_\nu\Big(\frac nq P\Big)+O(\log n)\right).
		\]
		\item\label{suminf} For every $\nu$ infinite, $h_\nu(nP)$ is negligible compared to $h(nP)$. In particular,
		\[
		\sum_{\nu\in M_K^\infty} h_\nu(nP)=o\left(n^2\right).
		\]
		\item\label{4} Putting together the inequalities of \eqref{sumfin} and \eqref{suminf}, we obtain
		\begin{align*}
			2n^2\hat{h}(P)&=2\hat{h}(nP)\\&=h(nP)+O(1)\\&=\frac 1D\sum_{\nu\in M_K^0} n_\nu h_\nu(nP)+\frac 1D\sum_{\nu\in M_K^\infty}n_\nu h_\nu(nP)+O(1)\\&\leq\sum_{q\mid n}h\Big(\frac nq P\Big)+o\left(n^2\right)\\&=2\hat{h}(P)\sum_{q\mid n}\frac {n^2}{q^2}+o\left(n^2\right)\\&=
			2n^2\hat{h}(P)\left(\left(\sum_{q\mid n}\frac 1{q^2}\right)+o(1)\right).
		\end{align*}  
		Note that one can use even sharper arguments using a complete inclusion-exclusion to find better inequalities (see for example \cite[Proof of the main theorem]{EDSCM}).
		\item For every $n$ we have $\sum_{q\mid n}q^{-2}<1$ and then the inequality of \eqref{4} does not hold for $n$ large enough. So, $B_n$ does not have a primitive divisor only for finitely many $n\in \N$.
	\end{enumerate}
	In order to make this proof effective, we need to make Steps \eqref{1} and \eqref{suminf} effective. In Section \ref{secfin}, we bound $h_\nu(nP)-h_\nu((n/q)P)$ as in Step \eqref{1}. In Section \ref{secinf}, we make effective Step \eqref{suminf}.
	\section{Finite places}\label{secfin}
	Take $\p$ a prime over a valuation $\nu\in M_K^0$. Let $p$ be the rational prime under $\p$. Recall that $E$ is defined by a Weierstrass equation with integer coefficients. The group of points of $E(K_\p)$ that reduce to the identity modulo $\p$ is a group that is isomorphic to a formal group, as proved in \cite[Proposition VII.2.2]{arithmetic}. Observe that, in the hypotheses of this proposition, there is the requirement that $E$ is in minimal form. Anyway, the proof works in the exact same way only requiring that the coefficients of $E$ are integers in $K_\p$, that is our case. Let $Q\in E(K_\mathcal{P})$ and, using the equation defining the elliptic curve, it is easy to show that $3\nu(x(Q))=2\nu(y(Q))$ and therefore
	\begin{equation}\label{xq}
		2\nu\left(\frac{x(Q)}{y(Q)}\right)=-\nu(x(Q))>0.
	\end{equation}
	Define
	\[
	z(Q)=\frac{x(Q)}{y(Q)}\in K_\mathcal{P}.
	\]
	\begin{lemma}\label{np}
		Take $\nu\in M_K^0$ and let $\p$ be the associated prime. Define $n_\p$ as the smallest integer such that $n_\p P$ reduces to the identity modulo $\p$. Then, $kP$ reduces to the identity modulo $\p$ if and only if $k$ is a multiple of $n_\p$. Moreover, $\nu(x(kP))<0$ if and only if $k$ is a multiple of $n_\p$.
	\end{lemma}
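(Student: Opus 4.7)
The plan is to reduce the statement to standard facts about the kernel of reduction. The set $E_1(K_\p)$ of points of $E(K_\p)$ that reduce to the identity modulo $\p$ is a subgroup, since it is the group attached to the formal group of $E$ at $\p$ (\cite[Proposition VII.2.2]{arithmetic}). Consequently the set
\[
H=\{k\in\Z : kP \in E_1(K_\p)\}
\]
is a subgroup of $\Z$, hence cyclic. Provided $H$ contains a positive integer, it is generated by its smallest positive element, which by definition is $n_\p$, and the first assertion follows at once.

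The only real content is therefore the non-triviality of $H$, i.e.\ the existence of some positive multiple of $P$ landing in $E_1(K_\p)$. Here I would invoke the finiteness of the residue field: since $\OO_K/\p$ is finite, the non-singular part $\tilde{E}_{\mathrm{ns}}(\OO_K/\p)$ of the reduced curve is a finite group, and the standard filtration $E_1(K_\p)\subseteq E_0(K_\p)\subseteq E(K_\p)$, together with the finiteness of the component group $E(K_\p)/E_0(K_\p)$ (see \cite[Section VII.6]{arithmetic}), forces $E(K_\p)/E_1(K_\p)$ to be finite. The image of $P$ in this quotient must then have finite order, producing the required $k>0$.

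For the second assertion I would simply appeal to the observation already recorded in Section \ref{secfin}: because $E$ is in minimal form at $\nu$, a non-identity point $Q \in E(K_\p)$ reduces to the identity modulo $\p$ if and only if $\nu(x(Q))<0$. Combining this equivalence with the first part of the lemma yields $\nu(x(kP))<0 \iff n_\p\mid k$, as required.

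I do not anticipate any real obstacle; the whole lemma is pure group theory once the minimality hypothesis and the finite-residue-field argument are in place. The only point to be slightly careful about is $k=0$: the identity point has no affine $x$-coordinate, so the second assertion should be read for $k\neq 0$ (equivalently, $kP\neq O$), which is automatic here because $P$ is non-torsion and $n_\p\mid 0$ trivially.
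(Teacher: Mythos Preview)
Your argument is correct and follows essentially the same route as the paper: both proofs boil down to the observation that $\{k\in\Z: kP\equiv O\bmod \p\}$ is a subgroup of $\Z$ (the paper spells out the division-algorithm step by hand, you invoke the standard fact), and both finish the second assertion via the equivalence ``$Q$ reduces to $O$ iff $\nu(x(Q))<0$''. Your version is in fact slightly more complete, since you justify the existence of $n_\p$ via finiteness of $E(K_\p)/E_1(K_\p)$, whereas the paper takes this for granted.
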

	\begin{proof}
		Let $E_{\text{ns}}(\F_\p)$ be the group of non-singular points of the curve $E$ reduced (with respect to the given model) modulo $\p$. 
		Suppose by contradiction that $kP$ reduces to the identity but $k$ is not a multiple of $n_\p$. Take $q$ and $r$ the quotient and the remainder of the division of $k$ by $n_\p$. Since $n_\p$ does not divide $k$, we have that $0<r<n_\p$. So,
		\[
		rP\equiv nP-kqP\equiv O-O\equiv O \mod{\p}
		\]
		and this is absurd since $n_\p$ is the smallest positive integer such that $n_\p P\equiv O\mod{\p}$. Vice versa, if $k=qn_\p$, then
		\[
		kP\equiv q(n_\p P)\equiv qO\equiv O \mod{\p}.
		\]
		Now, we conclude by observing that a point $Q$ reduces to the identity modulo $\p$ if and only if $\nu(x(Q))<0$.
	\end{proof}
	\begin{lemma}\label{lemma:boundnu}
		Let $Q\in E(K)$ be such that $\nu(z(Q))>0$. Recall that $p$ is the rational prime such that $\nu(p)>0$. Then $\nu\left(z\left(p^eQ\right)\right)\geq e+\nu(z(Q))$. In particular, if $p^e\mid n$, then $\nu(z(nQ))>e$.
	\end{lemma}
	\begin{proof}
		By \cite[Corollary IV.4.4]{arithmetic}, $\nu\left(z\left(pQ\right)\right)\geq 1+\nu(z(Q))$. Now, we proceed by induction. The case $e=0$ is trivial. Assume that we know that $\nu\left(z\left(p^{e-1}Q\right)\right)\geq e-1+\nu(z(Q))$. Put $Q'=p^{e-1}Q$ and for the observation at the beginning of the proof we know $\nu\left(z\left(pQ'\right)\right)\geq 1+\nu\left(z\left(Q'\right)\right)$. Therefore,
		\[
		\nu\left(z\left(p^eQ\right)\right)=\nu\left(z\left(pQ'\right)\right)\geq 1+\nu\left(z\left(Q'\right)\right)=1+\nu\left(z\left(p^{e-1}Q\right)\right)\geq e+\nu(z(Q)).
		\]
		Now, we deal with the second part of the lemma. Let $n=p^{e}n'$ and, by Lemma \ref{np}, $\nu(z(n'Q))>0$. For the first part of the lemma, $\nu(z(nQ))\geq e+\nu(z(n'Q))>e$.
	\end{proof}
	\begin{lemma}\label{lemma:formal}
		Let $Q\in E(K)$ be such that $\nu(z(Q))>\nu(p)/(p-1)$. Then, \[\nu(z(nQ))=\nu(z(Q))+\nu(n)\] for all $n\geq 1$.
	\end{lemma}
	\begin{proof}
		This follows by \cite[Theorem IV.6.4 and Proposition VII.2.2]{silverman}.
	\end{proof}
	\begin{definition}
		Let $S$ be the set of finite places of $K$ such that $\nu| 2$ or $\nu$ ramifies over $\Q$. Observe that this set is finite.
	\end{definition}
	\begin{corollary}\label{cor:formal}
		Let $Q\in E(K)$ be such that $\nu(z(Q))>0$. If $\nu\notin S$, then \[\nu(z(nQ))=\nu(z(Q))+\nu(n)\] for all $n\geq 1$.
	\end{corollary}
	\begin{proof}
		Since $\nu\notin S$, we have $\nu(p)=1$ and $p-1\geq 2$. So, $\nu(z(Q))\geq 1>\nu(p)/(p-1)$ and we apply Lemma \ref{lemma:formal}.
	\end{proof}
	\begin{proposition}\label{prop:nu}
		Let $E$ be an elliptic curve defined over a number field $K$ and let $P\in E(K)$ be a non-torsion point. Take $\nu\in M_K^0$, let $\p$ be the associated prime, and $p$ be the rational prime under $\p$. Recall that $n_\p$ is the smallest positive integer such that $n_\p P$ reduces to the identity modulo $\p$. Assume that $n_\p\mid n$ and $n_\p\neq n$. Then, one of the following hold:
		\begin{itemize}[leftmargin=*]
			\item There exists a prime $q\mid n$ such that $\nu\left(z\left( (n/q) P\right)\right)>0$ and
			\[
			\nu(z(nP))=\nu\left(z\left(\frac nq P\right)\right)+\nu(q);
			\]
			\item $\nu\in S$ and \[n<n_\p p^{\frac{\nu(p)}{p-1}+1}.\]
		\end{itemize}
	\end{proposition}
	\begin{proof}
		Assume $\nu\notin S$ and let $Q=n_\p P$. Since $n/n_\p$ is an integer greater than $1$, there is a prime $q$ that divides it. By Corollary \ref{cor:formal},
		\[
		\nu(z(nP))-\nu\left(z\left(\frac nq P\right)\right)=\nu\left(z\left(\frac {n}{n_\p}Q\right)\right)-\nu\left(z\left(\frac {n}{qn_\p}Q\right)\right)=\nu\left(\frac {n}{n_\p}\right)-\nu\left(\frac {n}{qn_\p}\right)=\nu(q).
		\]
		
		So, we focus on the case $\nu \in S$.
		Assume that there exists $q\neq p$ such that $q\mid n/n_\p$. Then,
		\[
		\nu(z(nP))=\nu\left(z\left(\frac nq P\right)\right)
		\]
		by \cite[Corollary IV.4.4]{silverman} and we are done since $\nu(q)=0$. Assume now that there is no $q\neq p$ such that $q\mid n/n_\p$. So, $n=p^en_\p$ with $e\geq 1$ (since $n\neq n_\p$) and recall that we defined $Q=n_\p P$. 
		
		Assume that $e-1\geq\nu(p)/(p-1)$. Then, by Lemma \ref{lemma:boundnu}, $\nu\left(z\left(p^{e-1}Q\right)\right)>e-1\geq \nu(p)/(p-1)$. Therefore, by Lemma \ref{lemma:formal},
		\[
		\nu(z(nP))=\nu\left(z\left(p^eQ\right)\right)=\nu\left(z\left(p^{e-1}Q\right)\right)+\nu(p)=\nu\left(z\left(\frac np P\right)\right)+\nu(p).
		\]
		
		It remains the case $e-1<\nu(p)/(p-1)$. In this case, 
		\[
		n=n_\p p^e<n_\p p^{\frac{\nu(p)}{p-1}+1}. 
		\]
	\end{proof}
	\begin{remark}
		To explicitly compute $\nu(z(nP))$ in the second case of the previous proposition one can use \cite[Lemma 5.1]{stange}.
	\end{remark}
	\begin{lemma}\label{lemma:C1}
		Let $\nu\in S$, $\p$ be the associated prime, and $p$ be the associated rational prime. It holds
		\[
		n_\p p^{\frac{\nu(p)}{p-1}+1}\leq \gpf(2\Delta_K)^{\frac{\mathfrak{m}(\Delta_E)}{12}}\max\{4,\mathfrak{m}(j(E)^{-1})\}\left(2\gpf(2\Delta_K)^D+1\right)\gpf(2\Delta_K)^{D+1}.
		\]
		See Section \ref{sec:not} for the definition of the constants involved.
	\end{lemma}
	\begin{proof}
		Recall that we are working with an elliptic curve $E$ defined by a Weierstrass equation with integer coefficients. By Lemma \ref{lemma:boundorder},
		\[
		n_\p\leq p^{\frac{\nu(\Delta(E))}{12}}(2\N_{K/\Q}(\p)+1)\max\{4,\ord_\p(j(E)^{-1})\}.
		\]
		Since $\p$ is a prime over a place in $S$ and the primes that ramify divide the discriminant of the field $\Delta_K$, we have $\N_{K/\Q}(\p)\leq \gpf(2\Delta_K)^D$. Therefore,\[
		n_\p\leq \gpf(2\Delta_K)^{\frac{\nu(\Delta(E))}{12}}\max\{4,\mathfrak{m}(j(E)^{-1})\}\left(2\gpf(2\Delta_K)^D+1\right).\]
		Moreover, $p\leq \gpf(2\Delta_K)$ and $\nu(p)/(p-1)\leq \nu(p)\leq D$.
	\end{proof}
	\begin{definition}\label{def:C1}
		Define
		\[
		C_1=\gpf(2\Delta_K)^{\frac{\mathfrak{m}(\Delta_E)}{12}}\max\{4,\mathfrak{m}(j(E)^{-1})\}\left(2\gpf(2\Delta_K)^D+1\right)\gpf(2\Delta_K)^{D+1}.
		\]
	\end{definition}
	\begin{proposition}\label{hnu}
		Let $E$ be an elliptic curve defined over a number field $K$ and let $P\in E(K)$ be a non-torsion point. Take $\nu\in M_K^0$ and let $\p$ be the associated prime. Assume that $n_\p\mid n$, that $n_\p\neq n$, and that $n\geq C_1$. Then, there exists a prime $q\mid n$ such that
		\[
		h_\nu(nP)=h_\nu\left(\frac nq P\right)+2h_\nu\left(q^{-1}\right).
		\]
	\end{proposition}
	\begin{proof}
		Observe that we are in the hypotheses of Proposition \ref{prop:nu}. By Lemma \ref{lemma:C1} we know that, since $n\geq C_1$, we cannot be in the second case of Proposition \ref{prop:nu}. Therefore, there exists a prime $q\mid n$ such that
		\[
		\nu(z(nP))=\nu\left(z\left(\frac nq P\right)\right)+\nu(q).
		\]
		Observe that, given $Q\in E(K)$ with $\nu(x(Q))<0$, then by Equation \eqref{xq},
		\[
		h_\nu(x(Q))=\log \abs{x(Q)}_\nu=-2\log\abs{\frac{x(Q)}{y(Q)}}_\nu=-2\log\abs{z(Q)}_\nu.
		\]
		Therefore,
		\[
		h_\nu(nP)=-2\log\abs{z(nP)}_\nu=-2\log\abs{qz\left(\frac nqP\right)}_\nu=h_\nu\left(\frac nq P\right)+2h_\nu\left(q^{-1}\right).
		\]
	\end{proof}
	
	\section{Infinite places}\label{secinf}
	We know that $2n^2\hat{h}(P)$ is close to $h(nP)$ and that
	\[
	h(nP)=\frac 1D\sum_{\nu\in M_K^0}h_\nu(nP)+\frac 1D\sum_{\nu\in M_K^\infty}h_\nu(nP).
	\]
	Thanks to the previous section, we know how to bound $h_\nu(nP)$ for $\nu$ finite in the case when $B_n$ does not have a primitive divisor. Now, we need to bound $h_\nu(nP)$ for $\nu$ infinite. We show that, for $n$ large enough, $h_\nu(nP)$ is negligible compared to $n^2\hat{h}(P)$. 
	
	Recall that we are working with an elliptic curve $E$ defined by the equation $y^2=x^3-(g_2/4)x-(g_3/4)$ with $g_2$, $g_3\in \OO_K$.
	Fix an embedding $K \hookrightarrow \C$ and consider the group of complex points $E(\C)$. We briefly recall the properties of $E(\C)$. For the details see \cite[Chapter VI]{arithmetic}. There is a unique lattice $\Lambda\subseteq \C$ such that $\C/\Lambda$ is isomorphic to $E(\C)$ via the map $\phi:z\to (\wp(z),\wp'(z)/2,1)$ (see \cite[Theorem VI.5.1]{arithmetic}).
	Thanks to \cite[Proposition 1.1.5]{advanced}, we can take $\omega_1$ and $\omega_2$ two generators of $\Lambda$ such that $\tau=\omega_2/\omega_1\in \C$ is in the fundamental domain. In particular, $\Im \tau\geq\sqrt{3}/2$, where $\Im \tau$ is the imaginary part of $\tau$. We need to make this choice in order to use \cite[Theorem 2.1]{David}.
	
	Before proceeding, we need to define some constants. Let $h=\max\{1,h(1:g_2:g_3),h(j(E))\}$, where $h(1:g_2:g_3)$ is the usual height on $\mathbb{P}^2$ (for a definition see \cite[Section VIII.5]{arithmetic}). Let \[\log V_1=\max\left\{h,\left(3\pi \right)/\left(  D\cdot\Im \tau\right)\right\},\]  \[\log V_2=\max\left\{h,\left(3\pi \abs{\omega_2}^2\right)/\left( \abs{\omega_1}^2\cdot D\cdot\Im \tau\right)\right\}.\] Let $c_1:=3.6\cdot 10^{41}$, that is the constant $c_1$ of \cite[Theorem 2.1]{David} evaluated in $k=2$.
	Define
	\begin{equation}\label{C_3}
		C_3=\max\left\{30,eh,\log V_1/D,\log V_2/D,D\right\}
	\end{equation}
	and
	\begin{equation}\label{C2}
		C_2=54\cdot c_1\cdot D^6\log V_1\log V_2.
	\end{equation}
	\begin{proposition}\label{propdavid}
		Let $E$ be an elliptic curve defined by the equation $y^2=x^3-(g_2/4)x-(g_3/4)$ for $g_2,g_3\in K$ and take $P\in E(K)$. 
		Let $z\in \C$ be so that $\phi(z)=P$ and suppose $\log n> C_3$. If $0\leq m_1,n_1,m_2,n_2\leq n$ with $n_1,n_2\neq 0$, then
		\[
		\log\abs{z-\frac{m_1}{n_1}\omega_1-\frac{m_2}{n_2}\omega_2}>-C_2n^{1/2}.
		\]
	\end{proposition}
	\begin{proof}
		In \cite[Theorem 2.1]{David}, David proved that,
		for all integers $0\leq m_1,n_1,m_2,n_2\leq n$ with $n_1,n_2\neq 0$, we have
		\[
		\log\abs{z-\frac{m_1}{n_1}\omega_1-\frac{m_2}{n_2}\omega_2}>-c_1D^6(\log BD)(\log\log B+1+\log D+h)^3 \log V_1\log V_2
		\]
		where 
		\[
		\log B:=\max\{eh,\log n,\log V_1/D,\log V_2/D\}.
		\] Since $\log n>C_3$, we have $\log n> D$, $\log n>eh>h+1$, and $\log n=\log B$. Hence,
		\[
		c_1D^6(\log BD)(\log\log B+1+\log D+h)^3 \log V_1\log V_2<C_2\log^4 n.
		\]
		Moreover, since $\log n>30$, we have
		\[
		\log^4 n<n^{1/2}
		\]
		and then
		\[
		\log\abs{z-\frac{m_1}{n_1}\omega_1-\frac{m_2}{n_2}\omega_2}>-C_2n^{1/2}.
		\]
	\end{proof}
	\section{Proof of Theorem \ref{Thm1}}
	Define 
	\[
	\rho(n)=\sum_{p|n}\frac{1}{p^2}
	\]
	and $\omega(n)$ as the number of prime divisors of $n$. It is easy to prove, by direct computation, that
	\[
	\rho(n)<\sum_{p \text{  prime  }}\frac1{p^2}< \frac 12.
	\]
	Recall that $C_1$ is defined in Definition \ref{def:C1}.
	\begin{lemma}\label{divprim}
		Let $n\geq C_1$. If $B_n$ does not have a primitive divisor, then there exists an embedding $K\hookrightarrow \C$ such that
		\[
		\max\{\log\abs{x(nP)},0\}\geq 2\hat{h}(P)n^2(1-\rho(n))-2\log n-C_E(\omega(n)+1)
		\]
		where with $\abs{x(nP)}$ we mean the absolute value in the embedding and $C_E$ is defined in Section \ref{sec:not}.
	\end{lemma}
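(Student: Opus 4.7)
The plan is to combine the structural outline of Cheon--Hahn with the effective bounds from Proposition \ref{hnunonminimal}. Start from the inequality $h(nP)\geq 2n^2\hat{h}(P)-C_E$ coming from $|h-2\hat h|\leq C_E$ and the quadraticity of $\hat h$, and split
\[
h(nP)=\frac{1}{D}\sum_{\nu\in M_K^0}n_\nu h_\nu(nP)+\frac{1}{D}\sum_{\nu\in M_K^\infty}n_\nu h_\nu(nP).
\]
The idea is to upper bound the first (finite) sum using the non-primitivity hypothesis, and thereby get a lower bound on the second (archimedean) sum; then a pigeonhole on the finitely many infinite places yields a single embedding as required.

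For the finite part, suppose $B_n$ has no primitive divisor and let $\nu\in M_K^0$ satisfy $h_\nu(nP)>0$, so the associated prime $\p$ divides $B_n$. By Lemma \ref{np} and the non-primitivity, there exists a rational prime $q=q(\nu)\mid n$ with $\nu(x((n/q)P))<0$. Proposition \ref{hnunonminimal} then yields
\[
h_\nu(nP)\leq h_\nu\!\bigl((n/q)P\bigr)+2h_\nu(q^{-1})+C\cdot\mathbf{1}_{\nu\in S}.
\]
Because $h_\nu((n/q)P)\geq 0$ for every $q'\mid n$, I may enlarge the right side to $\sum_{q\mid n}h_\nu((n/q)P)$. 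Summing weighted by $n_\nu/D$ over all finite $\nu$ and using that the archimedean part is non-negative, the first term sums to at most $\sum_{q\mid n}h((n/q)P)$. For the $2h_\nu(q^{-1})$ term, grouping by $q$ and applying the product formula gives $\sum_{\nu\mid q,\nu\text{ finite}} n_\nu h_\nu(q^{-1})=D\log q$, whence the total contribution is $\leq 2\sum_{q\mid n}\log q\leq 2\log n$. The $S$-contribution is controlled by $\frac{1}{D}\sum_{\nu\in S}n_\nu C\leq sC$, since $\#S=s$ and $n_\nu\leq D$.

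Putting this together with $h((n/q)P)\leq 2(n/q)^2\hat h(P)+C_E$ yields
\[
\frac{1}{D}\sum_{\nu\in M_K^0}n_\nu h_\nu(nP)\leq 2n^2\hat h(P)\rho(n)+\omega(n)C_E+2\log n+sC.
\]
Subtracting this from $h(nP)\geq 2n^2\hat h(P)-C_E$ gives the lower bound
\[
\frac{1}{D}\sum_{\nu\in M_K^\infty}n_\nu h_\nu(nP)\geq 2\hat h(P)n^2(1-\rho(n))-2\log n-C_E(\omega(n)+1)-sC.
\]
Since $\sum_{\nu\in M_K^\infty}n_\nu=D$, the weighted average of $h_\nu(nP)$ over archimedean places is bounded below by the same quantity, and hence the maximum is too. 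Selecting an archimedean place $\nu_0$ achieving this maximum and choosing a corresponding embedding $K\hookrightarrow\C$ gives the desired inequality, because $h_{\nu_0}(nP)=\max\{0,\log|x(nP)|\}$ with the absolute value taken in that embedding.

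The main subtlety is the bookkeeping around $S$: one must avoid double-counting the $2h_\nu(q^{-1})$ term (valid for all finite $\nu$) and the extra $C$ term (only for $\nu\in S$), and must convert $\max\{C,2h_\nu(q^{-1})\}$ into the additive form $C+2h_\nu(q^{-1})$ before summing. Everything else (the lower bound on $h(nP)$, the quadraticity of $\hat h$, the archimedean pigeonhole) is routine.
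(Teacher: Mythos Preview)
Your proof is correct and follows essentially the same approach as the paper's: bound the finite-place sum via Proposition~\ref{hnunonminimal} after enlarging over all $q\mid n$, subtract from $h(nP)\geq 2n^2\hat h(P)-C_E$ to get a lower bound on the archimedean contribution, and pigeonhole on the infinite places. The only cosmetic differences are that you make explicit the enlargement $\max\{C,2h_\nu(q^{-1})\}\leq C+2h_\nu(q^{-1})$ and the product-formula step $\sum_{\nu\mid q}n_\nu h_\nu(q^{-1})=D\log q$, both of which the paper uses implicitly.
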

	\begin{proof}
		Suppose that $B_n$ does not have a primitive divisor and take $\nu$ finite. Let $\p$ be the associated prime and assume $\nu(B_n)>0$. Hence, $n_\p\mid n$ but $n\neq n_\p$ since $B_n$ does not have a primitive divisor. So, using Proposition \ref{hnu}, there is a prime $q_\nu\mid n$ such that 
		\[
		h_\nu(nP)=h_\nu\left(\frac {n}{q_\nu} P\right)+2h_\nu\left(q_\nu^{-1}\right).
		\]
		Let $M_K^{0,n}$ be the set of finite places $\nu$ such that $h_\nu(nP)>0$. Therefore,
		\begin{align*}
			\sum_{\nu\in M_K^0}n_\nu h_\nu(nP)&=\sum_{\nu\in M_K^{0,n}}n_\nu h_\nu(nP)\\&\leq \sum_{\nu\in M_K^{0,n}}n_\nu h_\nu\Big(\frac n{q_\nu} P\Big)+2n_\nu h_\nu\left(q_\nu^{-1}\right)\\&\leq\Big(\sum_{q|n}Dh\Big(\frac nq P\Big)+2Dh\left(q^{-1}\right)\Big).
		\end{align*}
		Here we are using that $h_\nu(kP)\geq 0$ for all $\nu\in M_K$ and all $k\geq 1$. Thus,
		\begin{align*}
			\frac{1}{D}\sum_{\nu\in M_K^{\infty}}n_\nu h_\nu(nP)&=h(nP)-\frac{1}{D}\sum_{\nu\in M_K^0}n_\nu h_\nu(nP)
			\\&\geq 2\hat{h}(nP)-C_E-\sum_{q|n}\Big(h\Big(\frac nqP\Big)+2\log q\Big)
			\\&\geq 2\hat{h}(nP)-C_E-2\log n-\sum_{q|n}\Big(2\hat{h}\Big(\frac nqP\Big)+C_E\Big)
			\\&= 2\hat{h}(P)n^2\left(1-\sum_{q\mid n}\frac{1}{q^2}\right)-2\log n-C_E(\omega(n)+1)\\&= 2\hat{h}(P)n^2(1-\rho(n))-2\log n-C_E(\omega(n)+1).
		\end{align*}
		Since $h_\nu(nP)\geq 0$ for all $\nu\in M_K$ and  $\sum_{\nu\in M_K^{\infty}} n_\nu=D$, at least one of the $h_\nu(nP)$, for $\nu\in M_K^{\infty}$, is larger than the RHS. Recalling that \[h_\nu(x(P))=\max\{\log \abs{x(nP)}_\nu,0\}\] we conclude that \[\max\{\log\abs{x(nP)},0\}\geq 2\hat{h}(P)n^2(1-\rho(n))-2\log n-C_E(\omega(n)+1).\]
	\end{proof}
	We briefly recall the hypotheses that we are assuming. As we said in the previous section, we are assuming that $E(\C)\cong \C/\Lambda$ with the lattice $\Lambda$ generated by the complex numbers $\omega_1$ and $\omega_2$. Moreover, we are working with an elliptic curve defined by a Weierstrass equation with integer coefficients and in short form. Recall that $C_2$ is defined in \eqref{C2} and define
	\[
	C_4=2\max_{\nu\in M_K^\infty}\{\max\{\abs{x(T)}_\nu\mid T\in E(\overline{K})[2]\setminus\{O\}\}\}.
	\]
	\begin{proposition}\label{prop:divprim}
		Assume that \begin{equation}\label{assump}
			2\hat{h}(P)n^2(1-\rho(n))-2\log n-C_E(\omega(n)+1)>0,
		\end{equation} that $n\geq C_1$, and that $\log n\geq C_3$, as defined in \eqref{C_3}. If $B_n$ does not have a primitive divisor, then
		\begin{equation}\label{finin}
			\hat{h}(P)n^2\leq n^{1/2}(2C_2+4+2C_E+\log C_4).
		\end{equation}
	\end{proposition}
	\begin{proof}
		Fix the embedding $K\hookrightarrow\C$ of Lemma \ref{divprim}.
		Since $B_n$ does not have a primitive divisor, we have
		\begin{equation}\label{divprim2}
			\log\abs{x(nP)}\geq 2\hat{h}(P)n^2(1-\rho(n))-2\log n-C_E(\omega(n)+1)
		\end{equation}
		thanks to Lemma \ref{divprim} and the assumption in \eqref{assump}.
		Consider the isomorphism $\C/\Lambda\cong E(\C)$ as in Section \ref{secinf} and take $z\in \C$ in the fundamental
		parallelogram of the period lattice of $E$ such that $\phi(z)=P$. Assume \[\abs{x(nP)}\geq C_4\] and let $\delta$ be the $n$-torsion point of $\C/\Lambda$ closest to $z$ (if it is not unique, we choose one of them). Then,
		\begin{equation}\label{xnp}
			\log \abs{x(nP)}\leq -2\log \abs{nz-n\delta}+\log 8
		\end{equation}
		thanks to \cite[Lemma 8]{ingram} (here we are using the assumption $\abs{x(nP)}\geq C_4$). This Lemma is stated for $K=\Q$, but the proof works in the exact same way for $K$ number field. Since $\delta$ is an $n$-torsion point, we have 
		\[
		\delta=\frac{m_1}{n}\omega_1+\frac{m_2}{n}\omega_2
		\]
		for $0\leq m_1,m_2\leq n$.
		Using Proposition \ref{propdavid} and the assumption that $\log n>C_3$, we have
		\[
		\log \abs{z-\delta}= \log \abs{z-\frac{m_1}{n}\omega_1-\frac{m_2}{n}\omega_2}\geq -C_2 n^{1/2}.
		\]
		Applying Inequalities \eqref{divprim2} and \eqref{xnp} we have
		\begin{align}\label{dislog8}
			\log 8+ 2C_2n^{1/2}&\geq -2\log\abs{z-\delta}+\log 8\nonumber\\&= 
			2\log \abs{n}-2\log\abs{nz-n\delta}+\log 8\nonumber\\&\geq
			-2\log\abs{nz-n\delta}+\log 8\nonumber\\&\geq 
			\log\abs{x(nP)}\nonumber\\&\geq 
			2\hat{h}(P)n^2(1-\rho(n))-2\log n-C_E(\omega(n)+1).
		\end{align}
		Observe that $\omega(n)\leq \log_2 n$ and $(1-\rho(n))>0.5$. Therefore, rearranging \eqref{dislog8}, we have
		\begin{align*}
			\hat{h}(P)n^2&\leq 2\hat{h}(P)n^2(1-\rho(n))\\&\leq 2\log n+C_E(\omega(n)+1) +\log 8+ 2C_2n^{1/2}\\&\leq n^{1/2}(2C_2+4+2C_E).
		\end{align*}
		Here we are using that $n^{1/2}>\log n$ thanks to the hypothesis $\log n>C_3$.
		Recall that we obtained this inequality assuming $\abs{x(nP)}\geq C_4$.
		If $\abs{x(nP)}<C_4$, applying again \eqref{divprim2}, we have
		\begin{align*}
			\log C_4\geq& \log \abs{x(nP)}\\\geq&2\hat{h}(P)n^2(1-\rho(n))-2\log n-C_E(\omega(n)+1).
		\end{align*}
		Therefore, one can easily show that, both in the case $\abs{x(nP)}<C_4$ and in the case $\abs{x(nP)}\geq C_4$, it holds
		\[
		\hat{h}(P)n^2\leq n^{1/2}(2C_2+4+2C_E+\log C_4).
		\]
	\end{proof}
	We are now ready to prove our main theorem. We will show that Equation \eqref{finin} does not hold if $n$ is large enough. 
	\begin{proof}[Proof of Theorem \ref{Thm1}]
		Define
		\[
		C_5=J_E^{-1}(2C_2+4+2C_E+\log C_4)
		\]
		and take
		\begin{equation}\label{assn}
			n>\max\left\{C_1,C_5^{2/3},V_1,V_2,\exp(D),(\exp(eh)),e^{30}\right\}.
		\end{equation}
		We want to show that $B_n$ has a primitive divisor.
		
		Observe that, thanks to the assumption in \eqref{assn} and the definition of $C_3$ in \eqref{C_3}, we have $\log n>C_3$. Moreover,
		\begin{align*}
			n^{3/2}&>C_5
			\\&=J_E^{-1}(2C_2+4+2C_E+\log C_4)
			\\&>\hat{h}(P)^{-1}(4+2C_E)
		\end{align*}
		and then \[
		n^2>\log n \cdot \hat{h}(P)^{-1}(4+2C_E).
		\]
		Therefore, Equation \eqref{assump} holds. Finally, $n\geq C_1$. Hence, we are in the hypotheses of Proposition \ref{prop:divprim}.
		
		We assume that $B_n$ does not have a primitive divisor and we find a contradiction. Since $B_n$ does not have a primitive divisor, we know that we can apply Proposition \ref{prop:divprim} and \eqref{finin} must hold. But
		\begin{align*}
			n^{3/2}&\geq J_E^{-1}(2C_2+4+2C_E+\log C_4)\nonumber\\&\geq\frac{2C_2+4+2C_E+\log C_4}{\hat{h}(P)} .
		\end{align*}
		and then \eqref{finin} does not hold. Therefore, we find a contradiction and then $B_n$ must have a primitive divisor.
		
		In conclusion, define
		\begin{equation}\label{eq:C6}
			C_6(E/K,\mathcal{M})=\max\left\{C_1,V_1,V_2,\exp(D),\exp(eh),e^{30},C_5^{2/3}\right\}
		\end{equation}
		and $B_n$ has a primitive divisor for $n>C_6(E/K,\mathcal{M})$.
		Observe that every constant involved in the definition of $C_6(E/K,\mathcal{M})$ does not depend on $P$ and it is effectively computable (we will give more details in the next section). So, we are done.
		
		Recall that we are working under the assumption that $E$ is defined by a short Weierstrass equation with integer coefficients. In order to conclude for the general case, one has to use Lemma \ref{lemma:cov}.
	\end{proof}
	\section{Explicit computation}\label{expsec}
	Now, we explicitly write a constant $C(E/K,\mathcal{M})$ such that Theorem \ref{Thm1} holds. We assume that $E$ is defined by a short Weierstrass equation with integer coefficients, the general case can be done using Lemma \ref{lemma:cov}. Recall that we defined many constants in Section \ref{sec:not}.
	
	First of all, we show how to bound $\abs{\tau}$, as defined at the beginning of Section \ref{secinf}. Recall that we are working under the assumption that $\tau$ is in the fundamental domain. Hence, we know $\abs{\Re{\tau}}\leq 1/2$ and then we study $\Im{\tau}$, the imaginary part of $\tau$. Put $q=e^{2\pi i\tau}$ and then \[\abs{q}=e^{-2\pi\Im{\tau}}.\] So, 
	\[
	\log\abs{q}=-2\pi\Im{\tau}.
	\]
	Thanks to \cite[Lemma 5.2.b]{Silvermancanonical}, we have
	\[
	\abs{\log \abs{q}}\leq 5.7+\max\{\log \abs{j(E)},0\}.
	\]
	Therefore,
	\[
	\abs{\Im{\tau}}=\frac{\abs{\log \abs{q}}}{2\pi}\leq \frac{5.7+\max\{\log \abs{j(E)},0\}}{2\pi}.
	\]
	We obtain
	\[
	\abs{\tau}^2=\abs{\Re{\tau}}^2+\abs{\Im{\tau}}^2\leq \frac 14+\left( \frac{5.7+\max\{\log \abs{j(E)},0\}}{2\pi}\right)^2.
	\]
	Let \[\log V_1'=\max\left\{h,\left(2\sqrt{3}\pi \right)/  D\right\},\]  
	\[\log V_2'=\max\left\{h,\left(2\sqrt3\pi\left(\frac 14+\left( \frac{5.7+\max\{\log \abs{j(E)},0\}}{2\pi}\right)^2\right) \right)/D\right\},\]
	and
	\[
	C_2'=54\cdot c_1\cdot D^6\log V_1'\log V_2'.
	\]	
	By the definitions of $V_1$, $V_2$, and $C_2$ given at the beginning of Section \ref{secinf}, we have $V_1'\geq V_1$, $V_2'\geq V_2$, and $C_2'\geq C_2$.
	Hence, by Equation \eqref{eq:C6}, Theorem \ref{Thm1} holds for
	\begin{equation}\label{finalK}
		C(E/K,\mathcal{M})=\max\left\{C_1,V_1',V_2',\exp(D),\exp(eh),e^{30},\left(\frac{2C_2'+4+2C_E+\log C_4}{J_E}\right)^{2/3}\right\}
	\end{equation}
	where $h=\max\{1,h(1:g_2:g_3),h(j(E))\}$, $c_1=3.6\cdot 10^{41}$, 
	\begin{itemize}
		\item[]
		\[
		C_1=\gpf(2\Delta_K)^{\frac{\mathfrak{m}(\Delta_E)}{12}}\max\{4,\mathfrak{m}(j(E)^{-1})\}\left(2\gpf(2\Delta_K)^D+1\right)\gpf(2\Delta_K)^{D+1},
		\]
		\item[]
		\[\log V_1'=\max\left\{h,\left(2\sqrt{3}\pi \right)/ D\right\},\] 
		\item[]  
		\[\log V_2'=\max\left\{h,\left(2\sqrt3\pi\left(\frac 14+\left( \frac{5.7+\max\{\log \abs{j(E)},0\}}{2\pi}\right)^2\right) \right)/D\right\},\]
		\item[]
		\[
		C_2'=54\cdot c_1\cdot D^6\log V_1'\log V_2',
		\]
		\item[]
		\[
		C_E=\frac{h(j(E))}{4}+\frac{h(\Delta)}{6}+2.14,
		\]
		\item[]
		\[C_4=2\max\{\abs{x(T)}\mid T\in E(\overline{\Q})[2]\setminus\{O\}\},\] 
		\item[]
		\[
		J_E=\frac{\log\abs{\N_{K/\Q}(\Delta_{E/K})}}{10^{15}D^3\sigma_{E/K}^6\log^2\left(104613D\sigma_{E/K}^2\right)}.
		\]
	\end{itemize}
	\section{Examples}
	We apply our main theorem to a couple of examples.
	\begin{example}\label{ex:1}
		Let $E$ be the rational elliptic curve defined by the equation $y^2=x^3-4x+4$. In this case, $D=\Delta_K=1$, $h\approx10.23$, $j(E)=-27648/11$, $\Delta_{E/K}=-2816$, $\sigma_{E/K}\approx1.78$, and $C_4\approx 4.76$. Using Equation \eqref{finalK}, we have
		\[
		C(E/K,\mathcal{M})\approx 5.88\cdot 10^{42}< 6\cdot 10^{42}.
		\]
	\end{example}
	With our methods, even if we optimize all the estimates in the proof, we cannot hope to find a constant for Theorem \ref{Thm1} much smaller than the one of Example \ref{ex:1}. Indeed, in the definition of $c_1$ and of $J_E$ appear constants that are very large (namely $10^{41}$ and $10^{15})$ and so, even if the other constants involved are small, we cannot find a constant much smaller than $10^{38}$. In order to find better constants, one would need to have better constants in the bound of canonical height and in logarithmic approximation.
	
	Now, we present another example where we show the techniques that one can use to find the terms without a primitive divisor.
	\begin{example}
		We focus on the elliptic curve $y^2=x^3-2x$ and $P=(2,2)\in E(\Q)$. The first terms of the sequence are $B_1=1$, $B_2=2^2$, $B_3=1$, $B_4=(2^4)(3^2)(7^2)$, and $B_5=(17)^2(19)^2$. Hence, $B_1$ and $B_3$ do not have a primitive divisor. 
		For the terms that have very large indexes, we can use Theorem \ref{Thm1}. So, we apply Theorem \ref{Thm1} with $C(E/K,\mathcal{M})$ as defined in Equation \eqref{finalK}. In the definition of $C(E/K,\mathcal{M})$ we substitute $J_E$ with $0.3$. Indeed, for every rational non-torsion point of $E$, we have $\hat{h}(P)>0.3$ and $J_E$ is a constant such that $J_E<\hat{h}(P)$. The minimum of the canonical height of the rational non-torsion points of $E$ is computed in \cite{lmfdb}, where the canonical height is defined as the double of our canonical height. By Theorem \ref{Thm1} we have that, for $n\geq 2\cdot 10^{31}$, $B_n$ has a primitive divisor. 
		
		To deal with the terms with indexes smaller than $2\cdot 10^{31}$, we can use the following techniques. By \cite[Theorem 1.3]{yabutavoutier} and \cite{verzobio2020primitive}, $B_n$ has a primitive divisor for $n$ even. So, we focus on the terms with odd indexes. As an easy corollary of \cite[Lemma 3.4]{yabutavoutier}, we have that if $B_n$ does not have a primitive divisor, then $\log B_n\leq 0.18n^2$. So, we can compute the values of $B_n$ and check if the inequality holds (this is much faster than computing the factorization of the terms). As far as we know, the faster way to compute $B_n$ is to use \cite[Theorem 1.9]{recurrence}, where is proved that, for $k\geq 9$,
		\begin{equation}
			b_{k}=\frac{b_{k-2}b_{k-6}b_4^2-b_{k-4}^2b_{6}b_{2}}{b_{k-8}b_2^2}
		\end{equation}
		where $b_k=\pm\sqrt{B_k}$ for an appropriate choice of the sign (for more details, see \cite[Definition B]{recurrence}). One can check that $\log B_n>0.18 n^2$ for $4\leq n\leq 10^5$ using PARI-GP \cite{PARI2} and then $B_n$ has a primitive divisor for $4\leq n\leq 10^5$. 
		So, our bound is too large to be computationally useful and then new methods are needed to bridge the gap.
	\end{example}
	
	\normalsize
	\baselineskip=17pt
	\bibliographystyle{plain}
	\bibliography{biblio}
	MATTEO VERZOBIO, INSTITUTE OF SCIENCE TECHNOLOGY AUSTRIA, AM CAMPUS 1, 3400, KLOSTERNEUBURG, AUSTRIA\\
	\textit{E-mail address}: matteo.verzobio@gmail.com
\end{document}